\documentclass{IEEEtran}

\usepackage{xspace,amssymb,epsfig,syntonly}
\usepackage{epsfig,amsmath,color}
\usepackage{xspace,syntonly,empheq}
\usepackage{url}
\usepackage{bbm}
\usepackage{mathtools}
\usepackage{cite}
\usepackage{graphicx}
\usepackage{algorithm,algorithmic}
\usepackage{verbatim}
\usepackage{amssymb}
\usepackage{amsthm}
\usepackage{enumerate}

\newcommand{\utwi}[1]{\mbox{\boldmath $#1$}}

\newcommand{\trace}{{\textrm{Tr}}}

\newcommand{\diag}{\mathsf{diag}}

\newcommand{\cD}{{\cal D}}

\newcommand{\cA}{{\cal A}}

\newcommand{\cB}{{\cal B}}
\newcommand{\cU}{{\cal U}}

\newcommand{\cY}{{\cal Y}}

\newcommand{\ba}{{\bf a}}

\newcommand{\bd}{{\bf d}}
\newcommand{\be}{{\bf e}}

\newcommand{\bg}{{\bf g}}

\newcommand{\bp}{{\bf p}}
\newcommand{\bq}{{\bf q}}

\newcommand{\bx}{{\bf x}}

\newcommand{\bv}{{\bf v}}

\newcommand{\bz}{{\bf z}}
\newcommand{\by}{{\bf y}}
\newcommand{\bA}{{\bf A}}
\newcommand{\bB}{{\bf B}}

\newcommand{\bD}{{\bf D}}

\newcommand{\bG}{{\bf G}}

\newcommand{\bH}{{\bf H}}

\newcommand{\bM}{{\bf M}}

\newcommand{\bR}{{\bf R}}

\newcommand{\bI}{{\bf I}}

\newcommand{\bDelta}{{\utwi{\Delta}}}

\newcommand{\blambda}{{\utwi{\lambda}}}

\newcommand{\bmu}{{\utwi{\mu}}}

\newcommand{\proj}{\mathrm{proj}}

\newcommand{\sfH}{\textsf{H}}
\newcommand{\sfT}{\textsf{T}}

\DeclareMathOperator*{\argmin}{arg\,min}

\DeclarePairedDelimiterX{\norm}[1]{\lVert}{\rVert}{#1}

\usepackage{epstopdf}

\begin{document}

\newtheorem{definition}{Definition}
\newtheorem{assumption}{Assumption}
\newtheorem{proposition}{Proposition}
\newtheorem{theorem}{Theorem}
\newtheorem{lemma}{Lemma}
\theoremstyle{definition}
\newtheorem{remark}{Remark}

\def\HS{\hspace{\fontdimen2\font}}
\font\myfont=cmr12 at 16pt

\IEEEoverridecommandlockouts

%------------------------------------------------------------------------------
% Title.
%------------------------------------------------------------------------------

\title{Network-Cognizant Voltage Droop Control \\for Distribution Grids} 

\author{Kyri Baker, Andrey Bernstein, Emiliano Dall'Anese, and Changhong Zhao
\thanks{K. Baker is with the College of Engineering and Applied Science at the University of Colorado, Boulder. Email: kyri.baker@colorado.edu}
\thanks{A. Bernstein, E. Dall'Anese, and C. Zhao are  with the National Renewable Energy Laboratory (NREL), Golden, CO, USA. Emails: name.lastname@nrel.gov}
\thanks{This work was supported by the Laboratory Directed Research and Development Program at NREL and by funding from the Advanced Distribution Management Systems Program of the U.S. Department of Energy's Office of Electricity Delivery and Energy Reliability under Lawrence Berkeley National Laboratory Contract No. DE-AC02-05CH11231.}
}

\maketitle
\vspace{-1cm}
\begin{abstract}
This paper examines distribution systems with a high integration of distributed energy resources (DERs)  and addresses the design of local control methods for real-time voltage regulation. Particularly, the paper focuses on proportional control strategies where the active and reactive output-powers of DERs are adjusted in response to (and proportionally to) local changes in voltage levels. The design of the voltage-active power and voltage-reactive power characteristics leverages suitable linear approximations of the AC power-flow equations and is network-cognizant; that is, the  coefficients of the controllers embed information on the  location of the DERs and forecasted non-controllable loads/injections and, consequently, on the effect of DER power adjustments on the overall voltage profile. A robust approach is pursued to cope  with uncertainty in the forecasted non-controllable loads/power injections. Stability of the proposed local controllers is analytically assessed and numerically corroborated. 
\end{abstract}

%\begin{IEEEkeywords} 
%Distribution systems; renewable generation; voltage regulation; Volt/VAR control; robust optimization.
%\end{IEEEkeywords}

%%%%%%%%%%%%%%%%%%%%%%%%%%%%%%%%%%%%%%%%%%%%%%%%%%
\section{Introduction}
\label{sec:Introduction}
%%%%%%%%%%%%%%%%%%%%%%%%%%%%%%%%%%%%%%%%%%%%%%%%%%

The increased deployment of renewable energy resources such as photovoltaic (PV) systems operating with business-as-usual practices has already precipitated a unique set of power-quality and reliability-related concerns at the distribution-system level~\cite{Liu08,Woyte06}. For example, in settings with high renewable penetration, reverse power flows increase the likelihood of voltages violating prescribed limits (e.g,. ANSI C84.1 limits). Furthermore, volatility of ambient conditions leads to rapid variations in renewable generation and, in turn, to increased cycling and wear-out of legacy voltage regulation equipment. 

To alleviate these concerns, some recent research efforts focused on development of local (i.e., autonomous) control strategies where each  power-electronics-interfaced distributed energy resource (DER) adjusts its output powers based on voltage measurements at the point of connection~\cite{Hoke15, Jahangiri, BakerISGT17, Farivar15, Xinyang15, Zhu16, Xinyang16}. Particularly, inverter-interfaced DERs implementing the so-called \emph{Volt/VAR control} or \emph{voltage droop control} have been shown to effectively aid voltage regulation by absorbing or providing reactive power in response to (and proportionally to) local changes in voltage magnitudes. 

Focusing on Volt/VAR control, a recommended setting for the voltage-reactive power characteristic for DERs is specified in the IEEE 1547.8 Standard \cite{IEEE1547}. However, the design of the voltage-reactive power characteristics is network-agnostic -- in the sense that it does not take into account the location of the DERs in the feeder and, thus, the effect of output power adjustments on the overall voltage profile. Further, the Volt/VAR mechanism specified in~\cite{IEEE1547} may exhibit oscillatory behaviors and its stability (in an input-to-state stability sense) is still under investigation~\cite{Hoke15, BakerISGT17, Xinyang15, Xinyang16}.

Several works addressed the design of local Volt/VAR controllers for voltage regulation purposes. For example, \cite{Farivar15, Xinyang15, Zhu16, Xinyang16} synthesized Volt/VAR controllers by leveraging  optimization and game-theoretic arguments. Stability claims were derived based on a linearized AC power-flow model in~\cite{Farivar15, Xinyang15, Zhu16}, while~\cite{Xinyang16} analyzed the stability of  incremental Volt/VAR controllers in the purview of the nonlinear AC power-flow equations. On the other hand, heuristics were utilized in, e.g.,~\cite{Calderaro14}. Approaches based on extremum-seeking control have been explored as a model-free alternative to Volt/VAR control~\cite{Arnold16}; however, it may be difficult to systematically take into account the network effects to design the control rule, especially in meshed and unbalanced systems. Capitalizing on the fact that distribution networks typically exhibit a high resistance-to-reactance ratio, additional works considered active power control to possibly improve efficiency and better cope with  overvoltage conditions \cite{Morren05,Samadi14,Tina15}.  Active and reactive power control in microgrids was studied in \cite{Lee17, Maleki14}.

However,\cite{Farivar15, Xinyang15, Zhu16, Xinyang16, Morren05} (and pertinent references therein) do not address the design of the voltage-reactive power and voltage-active power characteristics; rather, for given coefficients of the controllers (i.e., droop coefficients), rules to update the active power and/or reactive power are designed with the objective of ensuring a stable system operation. In addition to assuming the droop coefficients are given, \cite{Farivar15, Xinyang15, Zhu16, Xinyang16} employ an  incremental update strategy  to ensure stability. On the other hand,~\cite{Calderaro14, Samadi14, Lee17, Maleki14} addressed the problem of computing the coefficients of the controllers; however, the designs are based on heuristics and, hence, no optimality or stability claims are provided. 

This paper addresses the design of proportional control strategies wherein  active and/or reactive output-powers of DERs are adjusted in response to  local changes in voltage levels -- a methodology that we occasionally refer to as \emph{Volt/VAR/Watt control}. 

The voltage-active power and voltage-reactive power characteristics are  obtained  based on the following design principles:

i) Suitable linear approximations of the AC power-flow equations \cite{christ2013sens,sairaj2015linear,swaroop2015linear,bolognani2015linear} are utilized to render the voltage-power characteristics of individual DERs \emph{network-cognizant}; that is, the  coefficients of the controllers embed information about the  location of the DERs and non-controllable loads/injections and, consequently, on the effect of DER power adjustments on the overall voltage profile (rather than just the  effect on the voltage at the point of interconnection of the DER). 

ii)  A \emph{robust} design approach is pursued to cope  with uncertainty in the forecasted non-controllable loads/power injections. 

iii) The controllers are obtained with the objective of ensuring a stable system operation, within a well-defined notion of input-to-state stability.

Based on the design guidelines i)--iii) above,  the coefficients of the proportional controllers are obtained by solving a robust optimization problem. The optimization problem is solved at regular time intervals (e.g., every few minutes) so that the droop coefficients can be adapted to new operational conditions. The optimization problem  can accommodate a variety of performance objectives such as minimizing voltage deviations from a given profile, maximizing stability margins, and individual consumer objectives (e.g., maximizing active power production). By utilizing sparsity-promoting regularization functions \cite{Tibshirani94}, the proposed approach also enables selection of subsets of locations where Volt/VAR/Watt control is critical to ensure voltage control~\cite{OID}. 
The proposed framework subsumes existing  Volt/VAR control by simply forcing the Volt/Watt coefficients to zero in the optimization problem.
 
The paper is organized as follows: Section \ref{sec:models_and_approx} describes the model of the distribution grid and the AC power flow linearization; Section \ref{sec:control} presents our approach and formulates the optimization problem used to design the controllers; Section \ref{sec:robust} introduces its robust counterpart; Section \ref{sec:variants} presents a few possible objectives that could be considered in the controller design; Section \ref{sec:sims} provides a numerical analysis performed using the IEEE 37-node test feeder, including a sensitivity analysis of the proposed design framework and implementation of both single-phase  and a three-phase unbalanced systems; and lastly, Section \ref{sec:conclusion} concludes the paper.

%%%%%%%%%%%%%%%%%%%%%%%%%%%%%%%%%%%%%%%%%%%%%%%%%%
\section{System model}
\label{sec:models_and_approx}
%%%%%%%%%%%%%%%%%%%%%%%%%%%%%%%%%%%%%%%%%%%%%%%%%%

Consider a distribution system\footnote{Upper-case (lower-case) boldface letters will be used for matrices (column vectors); $(\cdot)^\sfT$ for transposition; %$(\cdot)^*$ complex-conjugate; and, $(\cdot)^\sfH$ complex-conjugate transposition; $\Re\{\cdot\}$ and $\Im\{\cdot\}$ denote the real and
  %imaginary parts of a complex number, respectively; $\mathrm{j} := \sqrt{-1}$ the imaginary unit; 
  and $|\cdot|$ denotes the absolute value of a number or the cardinality of a set. Let $\cA \times \cB$ denote the Cartesian product of sets $\cA$ and $\cB$.  %For $x \in \mathbb{R}$, function $[x]_+$ is defined as $[x]_+ := \max\{0,x\}$. 
  For a given $N \times 1$ vector $\bx \in \mathbb{R}^N$, $\|\bx\|_2 := \sqrt{\bx^\sfH \bx}$; $\|\bx\|_\infty := \max(|x_1|...|x_n|)$; and $\diag(\bx)$ returns a $N \times N$ matrix with the elements of $\bx$ in its diagonal. %; and, $\bx \succeq \by$ implies that the inequality $x_i \geq y_i$ is enforced for all the vector entries $i = 1, \ldots, N$.  
  The spectral radius $\rho(\cdot)$ is defined for an $N\times N$ matrix $\bA$ and corresponding eigenvalues $\lambda_1 ... \lambda_N$ as $\rho(\bA) := \max(|\lambda_1|, ... ,|\lambda_N|)$. For an $M\times N$ matrix $\bA$, the Frobenius norm is defined as $||\bA||_F = \sqrt{\trace(A^*A)}$ and the spectral norm is defined as $||\bA||_2 := \sqrt{\lambda_{max}(\bA^*\bA)}$, where $\lambda_{max}$ denotes maximum eigenvalue. Finally, $\bI_N$ denotes the $N \times N$ identity matrix. } comprising  $N+1$ nodes collected in the set $\mathcal{N}  \cup \{0\}$, $\mathcal{N} := \{1, \ldots, N\}$. Node $0$ is defined to be the distribution substation. Let $v_n$ denote the voltage at node $n = 1,\ldots N$ and let  $\bv :=  [|v_1|, \ldots, |v_N|]^\sfT \in \mathbb{R}^{N}$ denote the vector collecting the voltage magnitudes.
  %; and, $\mathbf{0}_{N}$, $\mathbf{1}_{N}$ the $N$-dimensional vectors with all zeroes and ones, respectively, and $\mathbf{0}_{N \times M}$, $\mathbf{1}_{N \times M}$ are $N \times M$ matrices with all zeroes and ones.}  comprising  $N+1$ nodes collected in the set $\mathcal{N}  \cup \{0\}$, $\mathcal{N} := \{1, \ldots, N\}$. Node $0$ is defined to be the distribution substation. Let $v_n$ denote the voltage at node $n = 1,\ldots N$ and define the vector  $\bv :=  [|v_1|, \ldots, |v_N|]^\sfT \in \mathbb{R}^{N}$, where $(\cdot)^\sfT$ denotes transposition. 

Under certain conditions, the non-linear AC power-flow equations can be compactly written as
\begin{align} 
%\bw & \approx \bH \bp + \bJ \bq + \bc \label{eq:approximateV} \\
\bv = F(\bp, \bq), \label{eq:pf_exact} 
\end{align}
where $\bp \in \mathbb{R}^N$ and $\bq \in \mathbb{R}^N$ are vectors collecting the net active and reactive power injections, respectively, at  nodes $n=1...N$. The existence of the power-flow function $F$ is related to the question of existence and uniqueness of the power-flow solution and was established in several recent papers under different conditions\footnote{In this paper, $F$ is used only to analyze the stability of the proposed controllers, and thus \eqref{eq:pf_exact} can be considered as a ``black box'' representing the reaction of the power system to the net active and reactive power injections $(\bp, \bq)$. In fact, this view does not require uniqueness of the power-flow solution by allowing the function $F$ to be time-dependent.} \cite{congLF,Bolognani15}.

Nonlinearity of the AC power-flow equations poses significant challenges 
with regards to solving problems such as optimal power flow  as well as the design of the proposed decentralized control strategies for DERs. 
Thus, to facilitate the controllers' design, linear approximations of \eqref{eq:pf_exact} are utilized in this paper. In particular, we consider a linear relationship between voltage magnitudes and injected active and reactive powers of the following form: 
\begin{align} 
%\bw & \approx \bH \bp + \bJ \bq + \bc \label{eq:approximateV} \\
\bv & \approx F_L(\bp, \bq) = \bR \bp + \bB \bq + \ba. \label{eq:approximate} 
\end{align}
System-dependent matrices $\bR \in \mathbb{R}^{N \times N}$, $\bB \in \mathbb{R}^{N \times N}$, and vector $\ba \in \mathbb{R}^{N}$ can be computed in a variety of ways:

i) Utilizing suitable linearization methods for the AC power-flow equations, applicable when the network model is known; see e.g.,~\cite{Baran89,christ2013sens,sairaj2015linear,swaroop2015linear,bolognani2015linear,multiphaseArxiv,linModels} and pertinent references therein; and,  

ii) Using regression-based methods, based on real-time measurements of $\bv$, $\bp$, and $\bq$. E.g., the recursive least-squares method can be utilized to continuously update the model parameters. 

It is worth emphasizing that the linear model \eqref{eq:approximate} is  utilized to facilitate the  design of the optimal controllers; on the other hand,  the stability analysis and numerical experiments are performed using the exact (nonlinear) AC power-flow equations. Moreover, 
we note that the accuracy of the linear model is  dependent on the particular linearization method. For example, \cite{multiphaseArxiv} presents linear models that provide accurate representations of the voltages under variety of loading conditions.

\vspace{.1cm}

\begin{remark}
For notational and exposition simplicity, the proposed framework is outlined for a balanced distribution network. However, the proposed control framework is naturally applicable to multi-phase unbalanced systems with any topology. In fact, the linearized model~\eqref{eq:approximate} can be readily extended to the multi-phase unbalanced setup as shown in  e.g.,~\cite{christ2013sens,multiphaseArxiv}, and the controller design procedure outlined in the ensuing section can be utilized to compute the Volt/VAR/Watt characteristics of devices located at any bus and phase. 
%\rev{In particular, using the linear model proposed in \cite{multiphaseArxiv}, we can handle various connection types, such as  wye and ungrounded delta, as well as line-to-line and line-to-grounded-neutral at the secondary of the distribution transformers. 
To demonstrate this, we performed numerical experiments of a three-phase system in Section \ref{sec:sensitive}.
\end{remark}

%%%%%%%%%%%%%%%%%%%%%%%%%%%%%%%%%%%%%%%%%%%%%%%%%%
\section{Controller Design}
\label{sec:control}
%%%%%%%%%%%%%%%%%%%%%%%%%%%%%%%%%%%%%%%%%%%%%%%%%%

In this section, the main concept of the tuning the coefficients of the droop controllers for active and reactive power is discussed. Below is the outline of our approach:
\begin{itemize}
\item \textbf{Optimal droop controllers design.} On a slow time-scale (e.g., every 5-15 minutes), update the parameters of the linear model and forecasts of solar and load, and compute the coefficients of the droop controllers based on the knowledge of the network, with the objective of minimizing voltage deviations while keeping the system stable.  
%See Sections \ref{sec:opt_contr} and \ref{sec:robust} below for details.
\item \textbf{Real-time operation.} On a fast time-scale (e.g., subsecond), adjust active and reactive powers of DERs locally, based on the recently computed coefficients. Ensure the resulting adjustments are within the inverter operational constraints by projection onto the feasible set of operating points.
%See Sections \ref{sec:initial} and \ref{sec:proj} below for details. 
\end{itemize}

\subsection{Problem Formulation} \label{sec:initial}
Consider a discrete-time decision problem of adjusting active and reactive power setpoints during real-time operation in response to local changes in voltage magnitudes. Let $k = 1, 2, \ldots$ denote the time-step index, and let the 
voltage magnitudes at time step $k$ be expressed as 
\begin{equation} \label{eqn:exact_v}
\bv(k) = F(\bp(k) + \bDelta \bp(k), \bq(k) + \bDelta \bq(k)),
\end{equation}
where $\bp(k)$ and 
$\bq(k)$ are the active and reactive powers setpoints, respectively, throughout the feeder and $\bDelta \bp (k)$ and $\bDelta \bq (k)$ are the vectors of active and reactive power adjustments of the Volt/VAR/Watt controllers. Also, consider a given power-flow solution $\bar{\bv}, \bar{\bp}, $ and $  \bar{\bq}$ satisfying \eqref{eq:pf_exact} and \eqref{eq:approximate}; see e.g., \cite{christ2013sens,bolognani2015linear}. The triple $(\bar{\bv}, \bar{\bp}, \bar{\bq})$ can be viewed as a reference power-flow solution (e.g., a linearization point of \eqref{eq:approximate}). Finally, let $\bDelta \bv (k) := \bv(k) - \bar{\bv}$ denote the voltage deviation from $\bar{\bv}$. 

The objective is to design a decentralized proportional real-time controller to update $\bDelta \bp(k)$ and $\bDelta \bq(k)$ in response to $\bDelta \bv (k-1)$. That is, the \emph{candidate} adjustments are given by
\begin{equation} \label{eqn:controller}
%[\bDelta \bp^\sfT \bDelta \bq^\sfT]^\sfT = \bG \bDelta \bv, 
\bDelta \tilde{\bp}(k) = \bG_p \bDelta \bv(k-1), \quad  \bDelta \tilde{\bq}(k) = \bG_q \bDelta \bv(k-1), 
\end{equation}
where $\bG_p$ and $\bG_q$ are \emph{diagonal} $N \times N$ matrices collecting the coefficients of the proportional controllers. The change in active power output at node $n$ in response to a change in voltage at node $n$ is then given by each on-diagonal element in $\bG_p$, $g_{p, n} := (\bG_{p})_{nn}$, $n = 1, \ldots, N$; and the change in reactive power output at node $n$ in response to a change in voltage at node $n$ is given by each on-diagonal element in $\bG_q$, $g_{q, n} := (\bG_{q})_{nn}$, $n = 1, \ldots, N$. 

However, due to inverter operational constraints, setting $\bDelta \bp(k) = \bDelta \tilde{\bp}(k)$ and $\bDelta \bq(k) = \bDelta \tilde{\bq}(k)$ might not be feasible. We next account for this by projecting the candidate setpoint onto the feasible set. 
To this end, let $\cY_n(k)$ be the set of feasible operating points for an inverter located at node $n$ at time step $k$. %and denote as $\bz_n(k) := [P_n(k), Q_n(k)]^T$ the vector collecting the active power $P_n(k)$ and reactive power $Q_n(k)$ of the inverter at time step $k$. 
For example, for a PV inverter with rating $S_n$ and an available  power $P_{\textrm{av},n}(k)$, the set $\cY_n(k)$ is given by 
$$ \cY_n(k)  =  \left\{({P}_{n}, {Q}_{n} ) \hspace{-.1cm} :  0 \leq {P}_{n}  \leq  P_{\textrm{av},n}(k), {Q}_{n}^2  \leq  S_{n}^2 - {P}_{n}^2 \right\}. $$
Notice that, for PV inverters, the set $\cY_n(k)$ is convex, compact, and time-varying (it depends on the available power $P_{\textrm{av},n}(k)$).

From~\eqref{eqn:controller}, a new potential setpoint for inverter $n$ is generated as $\tilde{P}_n(k) := P_n(k) + g_{p, n} \Delta V_n (k-1)$,  and $\tilde{Q}_n(k) := Q_n(k) + g_{q, n} \Delta V_n (k-1)$. If $(\tilde{P}_n(k), \tilde{Q}_n(k)) \notin \cY_n(k)$, then a feasible setpoint is obtained as: 
\begin{equation}
(\hat{P}_n(k), \hat{Q}_n(k)) = \proj_{\cY_n(k)} \{ (\tilde{P}_n(k), \tilde{Q}_n(k)) \} \label{eqn:controller_proj}
\end{equation}
where $$\proj_{\cY} \{\bz\} := \argmin_{\by \in \cY} \|\by - \bz \|_2$$ 
denotes the projection of the vector $\bz$ onto the convex set $\cY$. For typical systems, such as PV or battery, the projection operation in~\eqref{eqn:controller_proj} can be computed in closed form (see, e.g., \cite{opfPursuit}). In general, the set $\cY_n(k)$ can be approximated by a polygon, and efficient numerical methods can be applied to compute the projection (as in, e.g.,  \cite{commelec1}).  

\begin{remark}
There are multiple
ways to perform the projection  onto the feasible operating region, depending on the metric used (Euclidean norm, infinity norm, etc).  
Moreover, the feasible  setpoint can also be chosen using heuristics (for example, by neglecting the contribution from active/reactive power entirely and projecting onto the reactive/active plane, respectively). However, by utilizing the projection operator with respect to the Euclidean norm as proposed in this work, we guarantee the stability properties established in Theorem 1 below.
\end{remark}

In the next section, we give conditions under which the proposed controllers are stable in a well-defined sense, while in Section \ref{sec:opt_contr}, we use these stability conditions to design  optimal control coefficients $\bG_p, \bG_q$. 

\subsection{Stability Analysis} \label{sec:stab}
We next analyze the input-to-state stability properties of the proposed controllers by making reference to a given linear model \eqref{eq:approximate}. The following assumption is made.
\begin{assumption} \label{asm:lin_mod}
The error between the linear model \eqref{eq:approximate} and the exact power-flow model \eqref{eq:pf_exact} is bounded, namely there exists $\delta < \infty$ such  that  $\|F(\bp, \bq) - F_L(\bp, \bq)\|_2 \leq \delta$ for all (feasible) $\bp$ and $\bq$.
\end{assumption}

For future developments, let $\bG := [\bG_p,\bG_q]^\sfT$ be a $2N\times N$ matrix composed of two stacked $N \times N$ diagonal matrices $\bG_p$ and $\bG_q$. Also, let $\bar{\bz} := [\bar{\bp}^T, ~ \bar{\bq}^T]^T$, and $\Delta\bp_{nc}(k) := \bp(k) - \bar{\bp}$ and $\Delta\bq_{nc}(k) := \bq(k) - \bar{\bq}$ denote the deviation of the uncontrollable powers at time step $k$ from the nominal value. Let the matrix $\bH$ and the vector $\bDelta \bz_{nc}(k)$ be defined as $\bH := [\bR, \,\bB]$ and $\bDelta \bz_{nc}(k) := [\bDelta \bp_{nc}(k)^\sfT, \bDelta \bq_{nc}(k)^\sfT]^\sfT$, where $(\bR, \bB)$ are the parameters of the linear model \eqref{eq:approximate}. Finally, let  $\bDelta \bz(k) := [\bDelta \bp(k)^\sfT, \bDelta \bq(k)^\sfT]^\sfT$ denote the controllable change in active and reactive power of each inverter.

Let $\cY(k) :=  \cY_1(k) \times \ldots \times \cY_N(k)$ be the aggregate compact convex set of feasible setpoints at time step $k$. Also, let
\begin{equation}
\cD(k) := \{\bDelta \bz: \, \bz(k) + \bDelta \bz \in \cY(k) \}
\end{equation}
denote the set of feasible Volt/VAR/Watt adjustments, where $\bz(k) = [\bp(k)^\sfT, \bq(k)^\sfT]^\sfT$ denotes the power setpoint at time step $k$ before the Volt/VAR/Watt adjustment.
It is easy to see that $\cD(k)$ is a convex set as well, and that the projected Volt/VAR/Watt controller \eqref{eqn:controller_proj} is equivalently defined by
\begin{equation} \label{eqn:controller_proj1} 
\bDelta \bz (k) = \proj_{\cD(k)} (\bG \bDelta \bv(k-1)).
\end{equation}
Recall that $\bar{\bv} = F(\bar{\bp}, \bar{\bq}) = F_L(\bar{\bp}, \bar{\bq})$. The dynamical system imposed by \eqref{eqn:exact_v}, \eqref{eqn:controller}, and \eqref{eqn:controller_proj1} is then given by
\begin{equation} \label{eqn:dyn_sys_exact}
\bDelta \bv (k) = F\left(\bz(k) + \proj_{\cD(k)} (\bG \bDelta \bv(k-1))\right) - F_L(\bar{\bp}, \bar{\bq})
\end{equation}

The following result provides us with a condition for stability of \eqref{eqn:dyn_sys_exact} in terms of the parameters of the linear model $\bH$ and the controller active/reactive power coefficients $\bG$.

\begin{theorem} \label{thm:stab}
Suppose that Assumption \ref{asm:lin_mod} holds. Also assume that $r := \|\bG \bH\|_2 < 1$ and that $\|\bDelta \bz_{nc}(k)\|_2 \leq C$ for all $k$. Then
\[
\limsup_{k \rightarrow \infty} \|\bDelta \bv(k)\|_2 \leq  \frac{\|\bH\|_2 C + (1 - r + \|\bG \|_2\|\bH\|_2) \delta}{1 - r}.
\]
\end{theorem}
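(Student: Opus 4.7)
My plan is to combine Assumption \ref{asm:lin_mod}, the non-expansiveness of Euclidean projection onto a convex set, and a two-stage contraction argument: first I will derive a one-step recursion for the auxiliary quantity $w(k) := \|\bG \bDelta \bv(k)\|_2$ (which is where the factor $\|\bG\bH\|_2$ will enter), and then plug the resulting bound back into an expression for $\|\bDelta\bv(k)\|_2$.

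\textbf{Step 1 (linearization of the closed-loop dynamics).} Writing $F(\bp,\bq) = F_L(\bp,\bq) + e(\bp,\bq)$ with $\|e\|_2 \le \delta$ by Assumption~\ref{asm:lin_mod}, and using $\bar{\bv} = F_L(\bar{\bp},\bar{\bq}) = \bH\bar{\bz} + \ba$, I would expand \eqref{eqn:dyn_sys_exact} as
\begin{equation*}
\bDelta\bv(k) \;=\; \bH\,\bDelta\bz(k) \;+\; \bH\,\bDelta\bz_{nc}(k) \;+\; e_k,
\end{equation*}
where $\bDelta\bz(k)=\proj_{\cD(k)}(\bG\bDelta\bv(k-1))$ and $\|e_k\|_2 \le \delta$. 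This cleanly separates the linear closed-loop part from the model mismatch.

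\textbf{Step 2 (non-expansiveness of the projection).} The current setpoint $\bz(k)$ is feasible, so $\mathbf{0}\in\cD(k)$; combined with $\proj_{\cD(k)}(\mathbf{0})=\mathbf{0}$ and non-expansiveness of Euclidean projection onto a convex set, this yields
\begin{equation*}
\|\bDelta\bz(k)\|_2 \;=\; \bigl\|\proj_{\cD(k)}(\bG\bDelta\bv(k-1))-\proj_{\cD(k)}(\mathbf{0})\bigr\|_2 \;\le\; \|\bG\bDelta\bv(k-1)\|_2 \;=\; w(k-1).
\end{equation*}

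\textbf{Step 3 (recursion for $w(k)$).} Left-multiplying the expansion in Step 1 by $\bG$ and taking norms,
\begin{equation*}
w(k) \;\le\; \|\bG\bH\|_2 \|\bDelta\bz(k)\|_2 + \|\bG\bH\|_2\,\|\bDelta\bz_{nc}(k)\|_2 + \|\bG\|_2\delta \;\le\; r\,w(k-1) + rC + \|\bG\|_2\delta.
\end{equation*}
Since $r<1$, iterating this linear recursion gives $\limsup_k w(k) \le (rC+\|\bG\|_2\delta)/(1-r)$.

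\textbf{Step 4 (bound on $\|\bDelta\bv(k)\|_2$).} Applying $\|\bH\|_2\|\bDelta\bz(k)\|_2 \le \|\bH\|_2 w(k-1)$ to the expansion from Step 1,
\begin{equation*}
\|\bDelta\bv(k)\|_2 \;\le\; \|\bH\|_2\,w(k-1) + \|\bH\|_2 C + \delta,
\end{equation*}
and taking $\limsup$ on both sides, then substituting the bound from Step 3 and simplifying the expression $\|\bH\|_2\bigl[rC+\|\bG\|_2\delta\bigr]+(1-r)(\|\bH\|_2 C+\delta)$, the terms involving $C$ collapse to $\|\bH\|_2 C$ and the terms involving $\delta$ collapse to $(1-r+\|\bG\|_2\|\bH\|_2)\delta$, producing the stated bound.

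The main subtlety I expect is at Step~2/3: the naive estimate $\|\bH\bDelta\bz(k)\|_2\le\|\bH\|_2\|\bG\|_2\|\bDelta\bv(k-1)\|_2$ would produce a contraction constant $\|\bH\|_2\|\bG\|_2$, which may be much larger than $r=\|\bG\bH\|_2$ and could fail to satisfy $r<1$. The trick that makes the theorem sharp is to run the contraction on the $\bG$-filtered quantity $w(k)=\|\bG\bDelta\bv(k)\|_2$ rather than on $\|\bDelta\bv(k)\|_2$ directly, and to use the non-expansive property of the projection to pass $\bG$ through the projection operator without introducing a separate $\|\bG\|_2$ factor. Once this choice is made, the rest of the argument is a straightforward geometric-series bound on a scalar linear recursion.
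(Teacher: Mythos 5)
Your proposal is correct and follows essentially the same route as the paper's proof: the same decomposition $\bDelta\bv(k)=\bH\bDelta\bz_{nc}(k)+\bH\,\proj_{\cD(k)}(\bG\bDelta\bv(k-1))+e_k$, the same use of $\mathbf{0}\in\cD(k)$ with non-expansiveness of the projection, and the same key idea of running the contraction on $\|\bG\bDelta\bv(k)\|_2$ so that the constant is $r=\|\bG\bH\|_2$ rather than $\|\bG\|_2\|\bH\|_2$, followed by the identical geometric-series bound and back-substitution. The arithmetic in your final step reproduces the stated constant exactly.
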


We note that Theorem \ref{thm:stab} establishes bounded-input-bound-state (BIBS) stability. Indeed, it states that under the condition $\|\bG \bH\|_2 < 1$, the state variables $\bDelta \bv(k)$ remains bounded whenever the input sequence $\{\bDelta \bz_{nc}(k) = \bz(k) - \bar{\bz}\}$ is bounded. Also, observe that the result of Theorem \ref{thm:stab} does not depend on the particular linearization method, as long as it satisfies Assumption \ref{asm:lin_mod}.

The proof of Theorem \ref{thm:stab} can be found in the Appendix. Next, we discuss the design of the controllers.

\subsection{Optimal Controller Design} \label{sec:opt_contr}

In this section, we propose an optimal design of droop coefficients $\bG := [\bG_p,\bG_q]^\sfT$. The objective is to minimize voltage deviations while keeping the system stable by explicitly imposing the condition $\|\bG \bH\|_2 < 1$ of Theorem \ref{thm:stab}. 
We leverage the following two simplifications that render the resulting optimization problem tractable:
\begin{enumerate}[(i)]
\item We consider a linear power-flow model \eqref{eq:approximate} instead of the exact one \eqref{eq:pf_exact};
\item We ignore the projection in the controllers' update.
\end{enumerate}
Based on these two simplifications, we obtain the following \emph{linear} dynamical system for voltage deviations (cf.~the exact non-linear dynamical system \eqref{eqn:dyn_sys_exact}):
\begin{align} 
\bDelta \tilde{\bv}(k) &= \bH \bDelta \bz_{nc}(k) + \bH \bDelta \bz(k) \nonumber \\
&= \bH \bDelta \bz_{nc}(k) + \bH \bG \bDelta \tilde{\bv}(k-1). \label{eqn:dyn}
\end{align}
We note that under the condition $\|\bG \bH\|_2 < 1$ of Theorem \ref{thm:stab}, we have that the spectral radius\footnote{See, e.g., \cite[Theorem 1.3.20]{H90} for the proof of the fact that  $\rho(\bH \bG) = \rho(\bG \bH)$  for any two matrices $\bH$ and $\bG$ with appropriate dimensions.} $\rho(\bH \bG) = \rho(\bG \bH) \leq \|\bG \bH\|_2 < 1$. Thus, from standard analysis in control of discrete-time linear systems, the system \eqref{eqn:dyn} is stable as well; see, e.g., \cite{Freeman1965}.

To design the controllers, we assume that a \emph{forecast} $\bmu$ for $\bDelta \bz_{nc}(k)$ is available. In particular, in this paper we compute $\bmu$ from the history  by averaging over the interval between two consecutive droop coefficient adjustments. However, other forecasting methods could be considered as well. Thus, define the following modified dynamical system that employs $\bmu$:

\begin{equation} \label{eqn:dyn_e}
\be(k+1) =  \bH \bG \be(k) + \bH \bmu.
\end{equation}
Note that as  $\rho(\bH \bG)  < 1$, the system \eqref{eqn:dyn_e} converges to the unique solution of the fixed-point equation
\[
\be =  \bH \bG \be + \bH \bmu
\]
given by
\[
\be = (\bI - \bH \bG)^{-1}\bH \bmu.
\]
Moreover, if the forecast $\bmu$ is accurate enough, namely $\| \bDelta \bz_{nc}(k) - \bmu \|_2 \leq \varepsilon$ for some (small) constant $\varepsilon$ and all $k$, then using the method  of proof of Theorem \ref{thm:stab} it can  be shown that
\[
\limsup_{k \rightarrow \infty} \|\bDelta \tilde{\bv}(k) - \be\|_2 \leq \frac{K \varepsilon}{1 - \rho(\bH \bG)}
\]
for some constant $K < \infty$, implying that  minimizing $\be$ also asymptotically minimizes $\bDelta \tilde{\bv}(k)$.

Hence, our goal in general is to design a controller $\bG$ that solves the following optimization problem:
\begin{subequations} 
\begin{align} 
\textrm{(P0)}&  \inf_{\substack{\bG, \be}}  \hspace{.2cm} f(\be, \bG) \\
&\mathrm{subject\,to} \nonumber \\
& \hspace{1cm}  \be =  (\bI - \bH \bG)^{-1}\bH \bmu \label{eqn:equal_constr}\\
&  \hspace{1cm} \|\bG \bH\|_2 < 1 \label{eqn:inequal_constr}
 \end{align}
\end{subequations}

\noindent for some convex objective function $f(\be, \bG)$.
However, this problem cannot be practically solved mainly due to: (i) non-linear equality constraint \eqref{eqn:equal_constr} and (ii) the fact that \eqref{eqn:inequal_constr} defines an open set. To address problem (i), we use the first two terms of the Neuman series of a matrix \cite{H90}:

\begin{equation}
(\bI-\bH \bG)^{-1}\bH \bmu \approx (\bI + \bH \bG)\bH\bmu. \label{eq:dev_approx}
\end{equation}

\noindent The sensitivity of this approximation to changes in $\bG$ is discussed further in Section \ref{sec:sims}. To address problem (ii), the strict inequality \eqref{eqn:inequal_constr} can be converted to inequality and included in an optimization problem by including a stability margin $\epsilon \geq \epsilon_0$ such that

\begin{equation} \label{eqn:inequal_constr_closed}
\|\bG \bH\|_2 \leq 1 - \epsilon
\end{equation}

\noindent where $\epsilon_0 > 0$ is a desired  lower bound on the stability margin. Finally, to further simplify this constraint, we upper bound the induced $\ell_2$ matrix norm with the Frobenius norm. 

Thus, (P0) is reformulated as the following:

\begin{subequations} 
\begin{align} 
\textrm{(P1)}&  \min_{\substack{\bG, \be, \epsilon}}  \hspace{.2cm} f(\be, \bG, \epsilon) \\
&\mathrm{subject\,to} \nonumber \\
& \hspace{1cm}  \be =  (\bI + \bH \bG)\bH\bmu \label{eqn:equal_constr_mod}\\
& \hspace{1cm} ||\bG \bH||_F \leq 1 - \epsilon \label{eqn:inequal_constr_mod}, \, \, i = 1, ..., N \\
& \hspace{1cm} \epsilon_0 \leq \epsilon \leq 1 \label{eqn:epsilon_constr}\\
& \hspace{1cm} \bG \leq 0 \label{eqn:pos_constr},
 \end{align}
\end{subequations}

\noindent where \eqref{eqn:pos_constr} ensures that each of the resulting coefficients are non-positive. As a first formulation of (P1), we consider minimizing the voltage deviation while providing enough stability margin, by defining the following objective function
\begin{equation} \label{eq:dev_obj}
f(\be, \bG, \epsilon) = \|\be\|_\infty - \gamma \epsilon,
\end{equation}
where $\gamma \geq 0$ is a weight parameter which influences the choice of the size of the stability margin $\epsilon$. The infinity norm was chosen in order to minimize the worst case voltage deviation in the system. %\andy{Comment on why we choose inifinity norm.}

%%%%%%%%%%%%%%%%%%%%%%%%%%%%
\section{Robust Design} \label{sec:robust}
%%%%%%%%%%%%%%%%%%%%%%%%%%%%

The optimization problem formulated in the previous section assumes that a forecast $\bmu$ is available, and a certainty equivalence formulation is derived. However, predictions are uncertain, and designing the coefficients for a particular $\bmu$ may result in suboptimality. 
Thus, in this section, we assume that the uncontrollable variables $\{\bDelta \bz_{nc}(k)\}$ belong to a \emph{polyhedral uncertainty set} $\cU$ (e.g, prediction intervals), 
%Thus, in this section, we define $\bmu$ as an uncertain parameter belonging to a polyhedral uncertainty set $\cU$, 
and formulate the robust counterpart of (P1), which results in a convex optimization program\footnote{In practice, the set $\cU$ can be provided by prediction/forecasting tools. Hence, the detailed discussion of this choice is out of the scope of this paper.}. A robust design is well-justified  in distribution settings with high penetration of renewable sources of energy where forecasts of the available powers might be affected by large errors (e.g., in situations where solar irradiance is highly volatile).

%\subsection{Robust Formulation}

In the spirit of~\eqref{eq:dev_approx}, we start the design by leveraging a truncated version of the Neuman series. To that end, we use the exact expression for $\bDelta \tilde{\bv} (k)$ obtained by applying \eqref{eqn:dyn} recursively:
\begin{align}
\bDelta \tilde{\bv}(k) &= \bH \left(\sum_{i = 0}^{k-1} (\bG \bH)^i \bz_{nc}(k-i) \right) \\
&= \bH \bDelta \bz_{nc}(k) + \bH \bG \bH \bDelta \bz_{nc}(k-1) + O\left( (\bG \bH)^2 \right). \nonumber
\end{align}

We next make the following two approximations:
\begin{enumerate}[(i)]
\item We neglect the terms $O\left( (\bG \bH)^2 \right)$. This is justified similarly to the Neuman series approximation \eqref{eq:dev_approx} under the condition that $\rho(\bG \bH) < 1$. 
\item We assume that the controllers are fast enough so that the variability of the uncontrollable variables in two consecutive Volt/VAR/Watt adjustment steps is negligible. Namely, we assume that $\bDelta \bz_{nc}(k) \approx \bDelta \bz_{nc}(k-1)$.
\end{enumerate}
Thus, $\bDelta \tilde{\bv}(k)$ is approximated as
\begin{equation} \label{eqn:dev_approx_new}
(\bI + \bH \bG)\bH  \bmu
\end{equation}
for some $\bmu \in \cU$; cf.~\eqref{eq:dev_approx}.

We next proceed to define a robust optimization problem that minimizes the $\ell_\infty$ norm of \eqref{eqn:dev_approx_new} for the worst-case realization of $\bmu \in \cU$.
%To account for uncertainty in the forecast of uncontrollable loads $\bmu$, a robust reformulation of (P0) is desired. 
Define $\bA(\bG) = (\bI + \bH \bG) \bH$
%and formulate a problem that seeks to find a solution that satisfies the worst-case realization of $\bmu$. 
and rewrite the problem in epigraph form so that the uncertainty is no longer in the objective function: 
\begin{subequations} 
\begin{align} 
\textrm{(P2)}&  \min_{\substack{\bG, \epsilon, t}} \hspace{0.8cm}  t - \gamma \epsilon \\
&\mathrm{subject\,to} \nonumber \\
&  \hspace{1cm} \max_{\substack\bmu \in \cU}||\bA(\bG)\bmu||_{\infty} \leq t \label{subproblem}\\
& \hspace{1cm} \eqref{eqn:inequal_constr_mod}, \eqref{eqn:epsilon_constr}, \eqref{eqn:pos_constr} \nonumber
\end{align}
\end{subequations}

\noindent where $\cU = \{\bmu : \bD \bmu \leq \bd \}$ for matrix $\bD$ and vector $\bd$ of appropriate dimensions. The constraint \eqref{subproblem} can equivalently be written as the following set of constraints:
\begin{align}
\max_{\substack\bmu \in \cU}\bigg|\sum_{j=1}^n \bA_{i,j}(\bG) \mu_j\bigg| \leq t, ~~\forall i = 1 ...n
\end{align}

Splitting the absolute value into two separate optimization problems, we obtain the following constraints:
\begin{subequations}
\begin{align}
\bigg( \max_{\substack\bmu \in \cU}~~\sum_{j=1}^n \bA_{i,j}(\bG) \mu_j \bigg)  \leq t, ~~\forall i = 1 ...n \label{sub1}\\
\bigg( \max_{\substack\bmu \in \cU} ~-\sum_{j=1}^n \bA_{i,j}(\bG) \mu_j \bigg) \leq t, ~~\forall i = 1 ...n\label{sub2}
\end{align}
\end{subequations}

\noindent To formulate the final convex robust counterpart of (P1), the dual problems of \eqref{sub1} and \eqref{sub2} are sought (see, e.g., \cite{Bertsimas11}). For clarity, define $\ba_i^T$ as the $i$th row of $\bA$. Since $\bG$ is not an optimization variable in the inner maximization problems, the dual problems for \eqref{sub1} and \eqref{sub2} can be written as follows: 

\vspace{2mm}
\noindent \textbf{Dual problem of \eqref{sub1}:}
\begin{align*}
\max_{\substack\bmu}&~~\ba_i^T \bmu \hspace{1.4cm} \Longleftrightarrow &\min_{\substack{\overline{\blambda}_i \geq 0}} & ~~ \overline{\blambda}_i^T\bd\\ 
\mathrm{s.t.} &~~\bD \bmu \leq \bd &\mathrm{s.t.} & ~~ \bD^T \overline{\blambda}_i = \ba_i 
\end{align*} 
\textbf{Dual problem of \eqref{sub2}:}
\begin{align*}
\max_{\substack\bmu}&~~-\ba_i^T \bmu \hspace{1.2cm} \Longleftrightarrow &\min_{\substack{\underline{\blambda}_i \geq 0}} &~~\underline{\blambda}_i^T\bd\\ 
\mathrm{s.t.} &~~\bD \bmu \leq \bd &\mathrm{s.t.} & ~~ \bD^T \underline{\blambda}_i = -\ba_i 
\end{align*} 
\noindent for all $i = 1 ... n$. Finally, the resulting robust counterpart can be written as follows:
\begin{subequations} 
\begin{align*} 
 \textrm{(P2)} & \min_{\substack{\bG, \epsilon, t, \blambda}}  \hspace{.2cm} t - \gamma \epsilon \\
&\mathrm{subject\,to} \nonumber \\
&  \hspace{1cm} \overline{\blambda}_i^T \bd \leq t, \forall i = 1 ... n \\
&  \hspace{1cm} \underline{\blambda}_i^T \bd \leq t, \forall i = 1 ... n \\
&  \hspace{1cm} \bD^T \overline{\blambda}_i = \ba_i (\bG), \forall i = 1 ... n \\
&  \hspace{1cm} \bD^T \underline{\blambda}_i = -\ba_i (\bG), \forall i = 1 ... n \\
&  \hspace{1cm} \underline{\blambda}_i, \overline{\blambda}_i \geq 0, \forall i = 1 ... n \\
&  \hspace{1cm} \eqref{eqn:inequal_constr_mod},\eqref{eqn:epsilon_constr}, \eqref{eqn:pos_constr}
 \end{align*}
\end{subequations}
\noindent and $\blambda = [\overline{\blambda}_1^T, \underline{\blambda}_1^T, ... \overline{\blambda}_n^T, \underline{\blambda}_n^T]^T$. Recalling that $\ba_i (\bG)$ is a linear function of the elements of $\bG_p$ and $\bG_q$, it can be seen that the resulting robust counterpart (P2) is convex. A summary of the proposed approach  is illustrated in Figure \ref{Fig:flowchart}.

%%%%%%%%%%%
\begin{figure*}
\begin{center}
  \includegraphics[height=7cm]{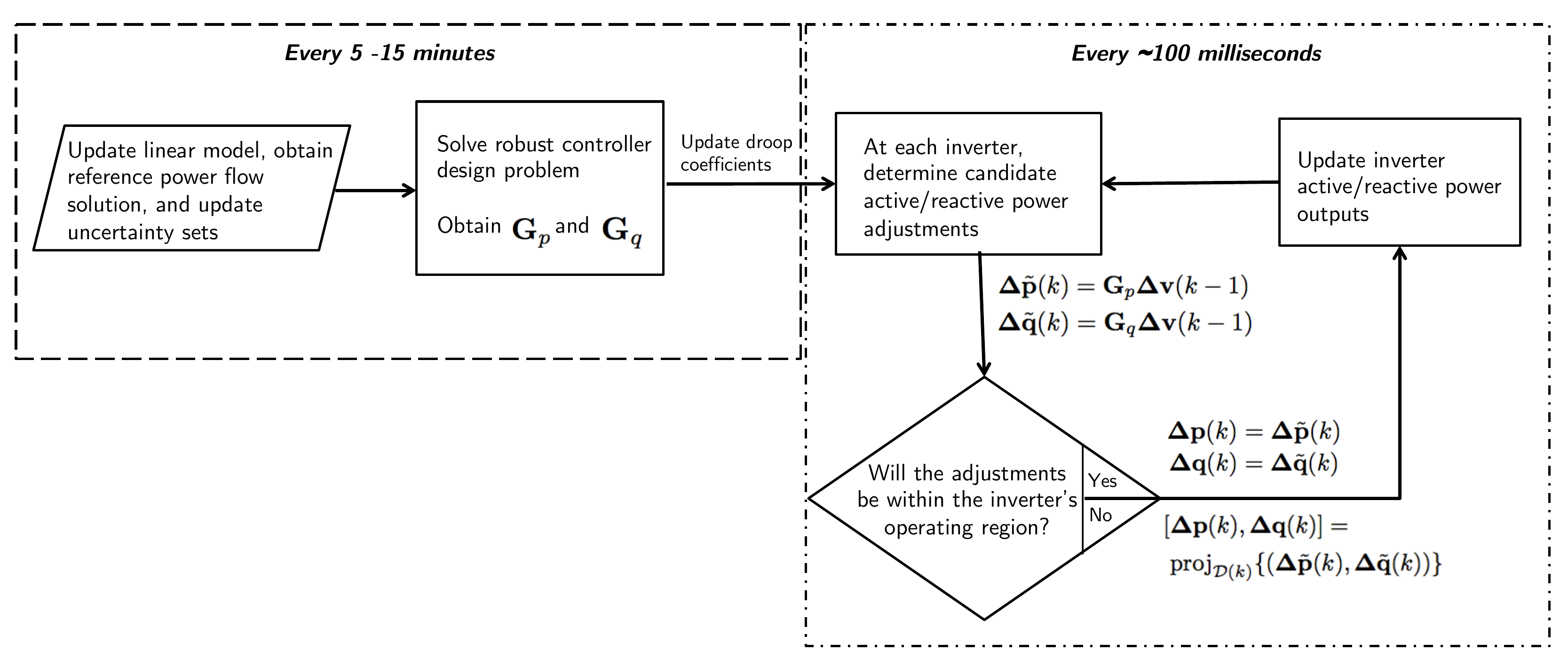} 
  \caption{Overall control strategy.}\label{Fig:flowchart}
  \end{center}
\end{figure*}
%%%%%%%%%%%%%%%%

\section{Variants} \label{sec:variants}

\subsection{Participation Factors} \label{sec:fairness}

The effectiveness of droop control depends on the location of the inverter in the network. For example, in areas of the feeder with a high X/R ratio, Volt/VAR control can prove to be more effective \cite{EPRI_VAR}. However, due to this location dependency, the optimization problem considered in (P2) could, for example, lead to a situation where particular inverters are forced to participate more often or at a higher participation level than other inverters. In addition, if each inverter is voluntarily participating and being compensated for its contribution to voltage support, certain consumers may wish to penalize contribution of active power more than reactive power and have their own individual objectives, or choose not to participate at all during certain times of the day. Thus, in this subsection, we formulate an objective that allows for the Volt/VAR and Volt/Watt coefficients to be penalized differently at each individual inverter. Consider the following objective:
\begin{equation}
f(\be, \bG, \epsilon) = ||\be||_\infty - \gamma\epsilon + \bG_p^T \bM_p \bG_p + \bG_q^T \bM_q \bG_q \label{eq:fairness}
\end{equation}
\noindent where matrices $\bM_p$ and $\bM_q$ are diagonal and positive semidefinite weighting matrices that penalizes the contribution of active reactive power, respectively, from each inverter. A Volt/VAR-only control can be  obtained by either penalizing active power contribution with a large entries in $\bM_p$, or adding the constraint $\bG_p = 0$.

\subsection{Enabling Selection of Droop Locations} \label{sec:sparsity}

Communication limitations, planning considerations, and other motivating factors could influence the number of DERs that are installed in a certain area of the grid, or that are actively performing droop control within any given time interval. To consider this objective, the sparsity of the matrices $\bG_p$ and $\bG_q$ may be of interest. This can be achieved by minimizing the cardinality of the diagonals of these matrices; however, the cardinality function yields a combinatorial optimization formulation which may result in an intractable optimization problem. An alternative is to use a convex relaxation of the cardinality function, the $\ell_1$ norm \cite{Tibshirani94}, where $\|\bx\|_1 = \sum_{i=1}^{N}|x_i|$. Thus, the objective function in this case, simultaneously considering minimizing voltage deviations and sparsity, is the following:
\begin{align}
f(\be, \bG, \epsilon) = ||\be||_\infty - \gamma\epsilon &+ \eta_p || \diag(\bG_p) ||_1 \nonumber \\ 
&+ \eta_q || \diag(\bG_q) ||_1 \label{eq:sparsity}
\end{align}

\noindent where the $\diag(\cdot)$ operator takes the on-diagonal elements of a $n \times n$ matrix and creates a $n \times 1$ vector composed of these elements. The weighting parameters $\eta_p$ and $\eta_q$ can be individually tuned to achieve the desired level of sparsity for both $\bG_p$ and $\bG_q$ (the bigger $\eta_p$ and $\eta_q$, the more sparse these matrices will be). 

\section{Numerical and Sensitivity Analysis}
\label{sec:sims}
%%%%%%%%%%%%%%%%%%%%%%%%%%%%%%%%%%%%%%%%%%%%%%%%%%

In this section, the modified IEEE 37-node test case will be discussed, and simulation results for the objectives considered in \eqref{eq:dev_obj}, \eqref{eq:fairness}, and \eqref{eq:sparsity} under the robust framework are shown. 

\subsection{IEEE Test Case}

%%%%%%%%%%%%%
% \begin{figure}[t]
% \begin{center}
% \includegraphics[width=8.5cm ]{feeder37_new}
% \end{center}
% \vspace{-.3cm}
% \caption{IEEE 37 node test system used in the simulations.}
% \label{Fig:37node}
% \vspace{-.2cm}
% \end{figure}
%%%%%%%%%%%%%

The IEEE 37 node test system \cite{IEEE37} was used for the simulations, with 21 PV systems located at nodes 4, 7, 9, 10, 11, 13, 16, 17, 20, 22, 23, 26, 28, 29, 30, 31, 32, 33, 34, 35, and 36. For this experiment, a balanced single-phase equivalent of the test system is utilized; however, Section \ref{sec:sensitive} provides numerical results for the three-phase unbalanced case. One-second solar irradiance and load data taken from distribution feeders near Sacramento, CA, during a clear sky day on August 1, 2012~\cite{Bank13}, was used as the PV/Load inputs to the controller and are seen in Figure \ref{Fig:totLoadPV}. The stability margin parameter $\epsilon_0$ was set to $1^{-3}$, and $\gamma = 0.01$. After the optimal controller settings were determined using the linearlized power flow model, the deployed controller settings were simulated using the actual nonlinear AC power flows in MATPOWER \cite{MATPOWER}. The uncertainty set for the expected value of the real and reactive power fluctuations, $\cU$, was taken to be an interval with bounds on the maximum and minimum forecasted value for the power at each node over the upcoming control period.  

%%%%%%%%%%%%%
\begin{figure}[t]
\vspace{-2.6cm}
\hspace*{-.5cm} \includegraphics[width=10cm ]{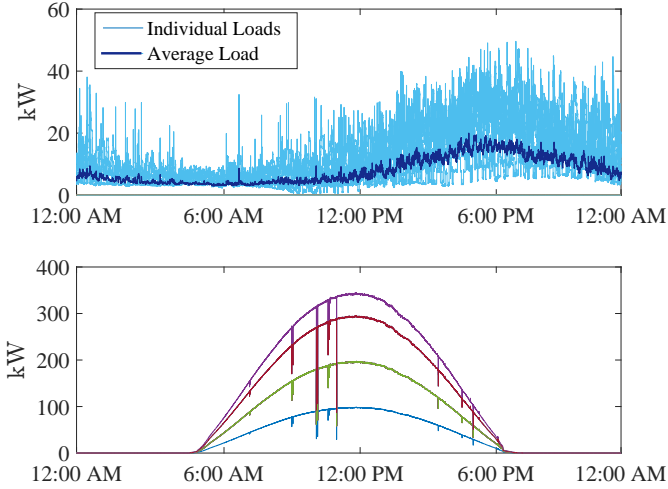}
\vspace{-3cm}
\caption{One-second data for the active power load at each node (top) and available solar generation at each inverter (bottom).}
\label{Fig:totLoadPV}
\vspace{-.2cm}
\end{figure}
%%%%%%%%%%%%%

\subsection{Locational Dependence of Droop Coefficients}

As will be demonstrated in the following, the optimal solution for the droop controllers is heavily location dependent. The following simulations were performed by choosing an objective that minimizes both voltage deviations and active power contribution (objective \eqref{eq:fairness} with $\bM_q = 0$ and $\bM_p = c\cdot \bI$; i.e., each inverter has equal penalty for Volt/Watt coefficients). The heatmap in Figure \ref{Fig:heatmap} illustrates the average magnitude of the desired droop settings for both Volt/VAR (top) and Volt/Watt (bottom) over four 15-minute control periods (11:00 AM - 12:00 PM). The higher magnitude of coefficients and thus increased voltage control towards the leaves of the feeder is consistent with previous research which has also found that voltage control can be most impactful when DERs are located near the end of distribution feeders \cite{Ranamuka14}.

In Figure \ref{Fig:PQSlopes_1hr}, the Volt/VAR and Volt/Watt coefficients are plotted for each inverter and each 15-minute control period. As the time approaches noon (i.e. as solar irradiance increases), the impact of active power control on mitigating voltage issues increases, as seen by the increase in Volt/Watt coefficients. Despite the penalty term in the objective on Volt/Watt coefficients and no penalty on Volt/VAR coefficients, active power control is still useful for voltage control in distribution networks due to the highly resistive lines and low X/R ratio \cite{Morren05}.

%%%%%%%%%%%%%%%%%
\begin{figure}[t]
\begin{center}
\vspace{-1cm}
\hspace{-3mm}\includegraphics[width=8cm ]{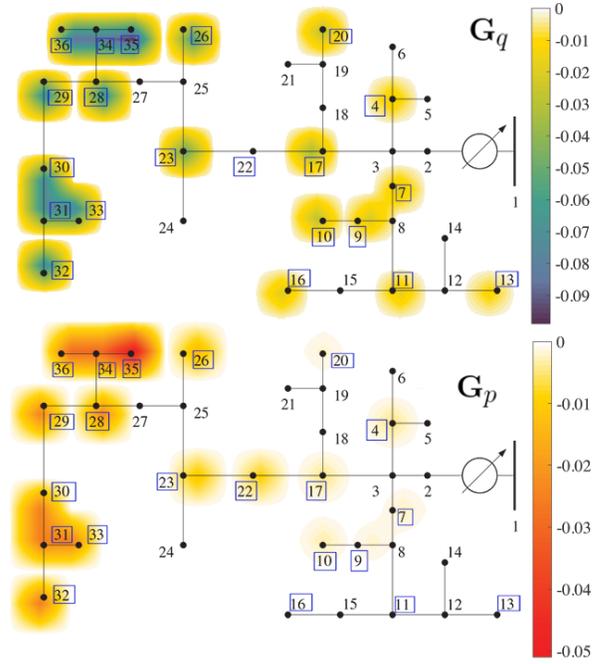}
\end{center}
\vspace{-1cm}
\caption{Heatmap of the average calculated droop coefficient at each inverter for Volt/VAR (top) and Volt/Watt (bottom) controllers over the course of 11:00 AM - 12:00 PM when active power contribution is penalized. Inverters, denoted with a rectangle around the node number, near the end of the feeder are expected to have a larger impact on voltage control.}
\label{Fig:heatmap}
\vspace{-.2cm}
\end{figure}
%%%%%%%%%%%%%%%%

%%%%%%%%%%%%%
\begin{figure}[t]
\begin{center}
\hspace{-4mm}\includegraphics[width=9cm ]{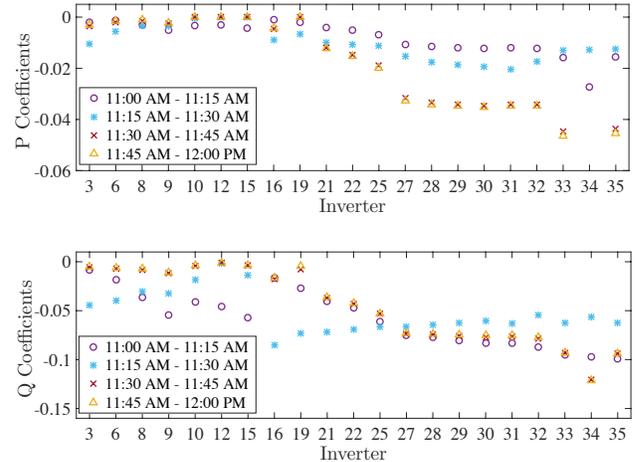}
\end{center}
\vspace{-.3cm}
\caption{Volt/VAR and Volt/Watt coefficients across all 21 inverters calculated every 15 minutes for a one hour period of 11:00 AM - 12:00 PM for the objective of minimizing voltage deviations away from 1 pu and Volt/Watt droop coefficients. As solar irradiance increases over time, active power has a more significant impact on voltage control, and thus the Volt/Watt coefficients become steeper.}
\label{Fig:PQSlopes_1hr}
\vspace{-.2cm}
\end{figure}
%%%%%%%%%%%%%

\subsection{Comparison}
To illustrate the benefits of the proposed methodology, a comparison with existing approaches to set the droop coefficients is provided next. We start with the case where the stability criterion is violated by increasing the value of the droop coefficients; this corresponds to the case where droop coefficients are determined in a network-agnostic way without system-level stability considerations. In the top subfigure in Figure \ref{Fig:5min_osc}, each droop coefficient was made steeper by -0.075. This overly aggressive control behavior results in voltage oscillations violating the upper $1.05$ pu bound, as seen in the figure. This motivates the use of explicitly including a constraint on stability in the optimization problem, rather than designing the controller according to heuristics. In addition to the potential of voltage oscillations, controllers whose settings are not updated over time may not be able to cope with the changing power and voltage fluctuations. We then consider a comparison with the Volt/VAR control settings specified in the IEEE 1547 guidelines \cite{IEEE1547}; see Figure \ref{Fig:V_1hr}. In comparison with the droop coefficients chosen via the Volt/VAR/Watt optimization problem, using the IEEE standard may result in undesirable voltage behavior, in this case violating the upper 1.05 pu bound. Lastly, there are some methods in the literature that design Volt/VAR droop coefficients based on the sensitivities at each node of reactive power to a change in voltage \cite{Jahangiri,Lee17}. However, designing the coefficients based on this heuristic offers no optimality guarantees; and as seen in Figure \ref{Fig:V_comp}, these coefficients can stabilize voltage deviations but result in undesirable voltage magnitudes (top subfigure). In the bottom subfigure of Figure \ref{Fig:V_comp}, only Volt/VAR control was implemented in the proposed framework to offer a fair comparison, and the droop coefficients were optimized to minimize voltage deviations. 

%%%%%%%%%%%%%
\begin{figure}[t]
\vspace{-2cm}
\begin{center}
\hspace{-4mm}\includegraphics[width=9cm ]{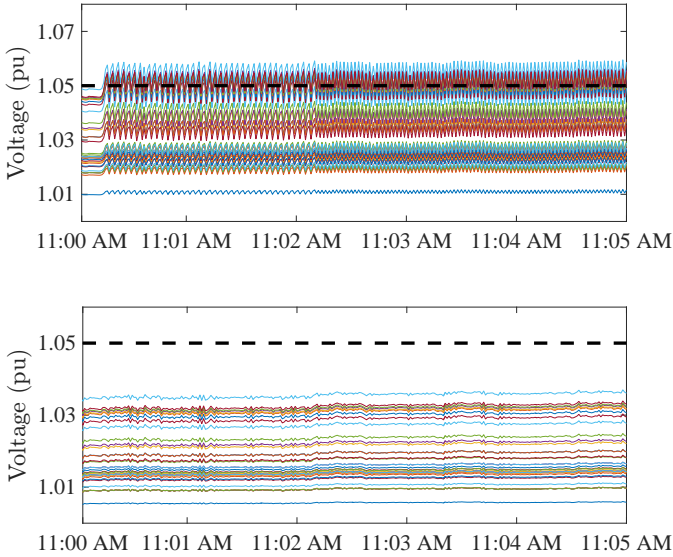}
\end{center}
\vspace{-2.2cm}
\caption{Voltage profiles for a five minute period with the droop coefficients decreased by -0.075 (top) and the proposed Volt/VAR/Watt droop control (bottom). Voltages oscillations occur when the coefficients are made more aggressive.}
\label{Fig:5min_osc}
\vspace{-.2cm}
\end{figure}
%%%%%%%%%%%%%
%%%%%%%%%%%%%
\begin{figure}[t!]
\vspace{-1.7cm}
\begin{center}
\hspace{-4mm}\includegraphics[width=8.5cm ]{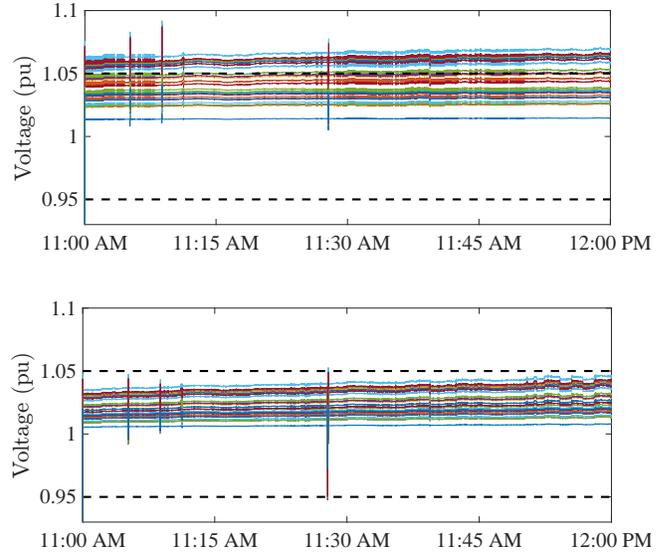}
\end{center}
\vspace{-1.8cm}
\caption{Voltages over an hour with the IEEE 1547 Volt/VAR standard (top) and the proposed Volt/VAR/Watt control (bottom). Voltages are between bounds with the optimized coefficients, whereas standard control results in overvoltages.}
\label{Fig:V_1hr}
\vspace{-.2cm}
\end{figure}
%%%%%%%%%%%%%

%%%%%%%%%%%%%
\begin{figure}[t!]
\vspace{-1.7cm}
\begin{center}
\hspace{-4mm}\includegraphics[width=8.5cm ]{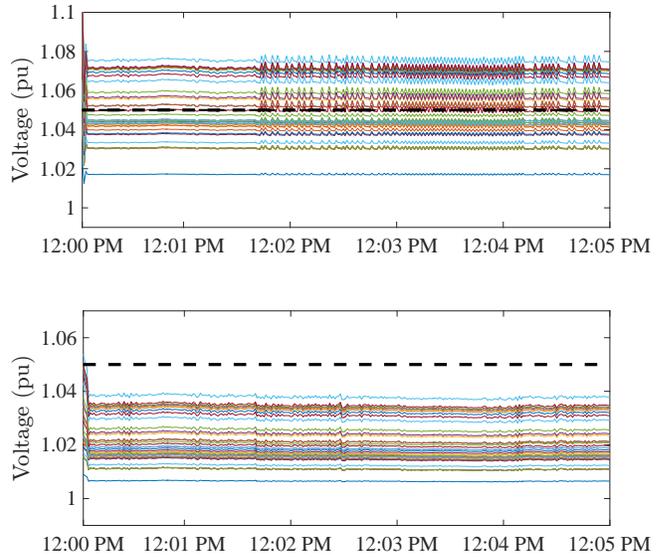}
\end{center}
\vspace{-1.8cm}
\caption{Voltage profiles obtained by calculating droop coefficients from a sensitivity matrix (top) and by using the proposed framework (bottom). Despite stabilizing the reactive power output of each inverter, calculating droop coefficients from voltage/reactive power sensitivities may sacrifice optimality and results in overvoltage conditions.}
\label{Fig:V_comp}
\vspace{-.2cm}
\end{figure}
%%%%%%%%%%%%%

\subsection{Controller Placement}

When planning for DER installation or when operating in a system constrained by communication limitations, there may be situations when the number of inverters participating in voltage support may be restricted. This objective, formulated in \eqref{eq:sparsity}, was used to optimize droop coefficients for 11:00 AM - 11:15 AM. The weighting parameters $\eta_p$ and $\eta_q$ were varied and the resulting coefficients from each of the cases are tabulated in Table \ref{tab:sparse}. In the first two columns where $\eta_p = \eta_q = 0$, the control matrices are full, and droop control is performed at every inverter. As expected, as the weighting terms increase, locations near the leaves of the feeder are selected as the most optimal for placement of the controllers. In the last column of the table, only one location is chosen to provide Volt/VAR support; however, it is worth noticing that the magnitude of the coefficient in this location is much greater than the individual coefficients when multiple inverters are participating. This is so that the impact of voltage control can still be high without the costly requirement of having multiple controllers. Overall, the location-dependence of the droop control highlights the value of droop control near the end of this particular feeder. When a limited number of droop controllers are available, the algorithm selects the most sensitive areas of the grid to provide the highest level of voltage regulation.

\begin{table}[]
\centering
\caption{Resulting droop coefficients when the number of controllers is penalized. }
\label{tab:sparse}
\begin{tabular}{|l|llllll}
\hline
\textbf{Node} & \multicolumn{2}{l|}{$\eta_p =\eta_q = 0$}  & \multicolumn{2}{l|}{$\eta_p =\eta_q = 0.001$}  & \multicolumn{2}{l|}{$\eta_p =\eta_q = 0.01$}   \\ \hline
\textbf{}     & \multicolumn{1}{l|}{$\bG_p$} & \multicolumn{1}{l|}{$\bG_q$} & \multicolumn{1}{l|}{$\bG_p$} & \multicolumn{1}{l|}{$\bG_q$} & \multicolumn{1}{l|}{$\bG_p$} & \multicolumn{1}{l|}{$\bG_q$} \\ \hline
\textbf{4}    & -0.002                             & -0.009                             & 0                                  & 0                                  & 0                                  & 0                                  \\ \cline{1-1}
\textbf{7}    & -0.001                             & -0.019                             & 0                                  & 0                                  & 0                                  & 0                                  \\ \cline{1-1}
\textbf{9}    & -0.003                             & -0.037                             & -0.001                            & 0                                  & 0                                  & 0                                  \\ \cline{1-1}
\textbf{10}   & -0.005                             & -0.055                             & -0.003                            & -0.002                             & 0                                  & 0                                  \\ \cline{1-1}
\textbf{11}   & -0.003                             & -0.041                             & -0.004                            & -0.010                              & 0                                  & 0                                  \\ \cline{1-1}
\textbf{13}   & -0.003                             & -0.046                             & -0.013                            & -0.044                             & 0                                  & 0                                  \\ \cline{1-1}
\textbf{16}   & -0.004                             & -0.057                             & -0.005                             & -0.024                             & 0                                  & 0                                  \\ \cline{1-1}
\textbf{17}   & -0.001                             & -0.017                             & 0                            & 0                                  & 0                                  & 0                                  \\ \cline{1-1}
\textbf{20}   & -0.002                             & -0.027                             & -0.001                            & 0                                  & 0                                  & 0                                  \\ \cline{1-1}
\textbf{22}   & -0.004                             & -0.041                             & -0.005                            & -0.025                             & 0                                  & 0                                  \\ \cline{1-1}
\textbf{23}   & -0.005                             & -0.047                             & -0.006                            & -0.035                             & 0                                  & 0                                  \\ \cline{1-1}
\textbf{26}   & -0.007                             & -0.061                             & -0.009                            & -0.051                             & 0                                  & 0                                  \\ \cline{1-1}
\textbf{28}   & -0.011                             & -0.075                             & -0.014                            & -0.073                             & -0.002                             & 0                                  \\ \cline{1-1}
\textbf{29}   & -0.012                             & -0.077                             & -0.015                            & -0.074                             & -0.004                             & 0                                  \\ \cline{1-1}
\textbf{30}   & -0.012                             & -0.081                             & -0.015                             & -0.075                             & -0.005                             & 0                                  \\ \cline{1-1}
\textbf{31}   & -0.012                             & -0.083                             & -0.015                            & -0.077                             & -0.006                             & 0                                  \\ \cline{1-1}
\textbf{32}   & -0.012                             & -0.083                             & -0.015                            & -0.077                             & -0.006                             & 0                                  \\ \cline{1-1}
\textbf{33}   & -0.012                             & -0.087                             & -0.015                            & -0.080                              & -0.006                             & 0                                  \\ \cline{1-1}
\textbf{34}   & -0.016                             & -0.095                             & -0.020                            & -0.102                             & -0.020                              & 0                                  \\ \cline{1-1}
\textbf{35}   & -0.027                             & -0.097                             & -0.037                            & -0.132                             & -0.056                             & -0.311                             \\ \cline{1-1}
\textbf{36}   & -0.016                             & -0.099                             & -0.019                            & -0.104                             & -0.019                             & 0                                  \\ \cline{1-1}
\end{tabular}
\end{table}

\subsection{Computational Burden and Neuman Approximation}\label{sec:sensitive}
In this section, we provide numerical indications regarding the growth of the computational burden with respect to the problem size as well as the sensitivity of the solution to changes in penalty terms. These simulations were performed on a single Macbook Pro laptop with a 3.1 GHz Intel Core i7 and 16 GB of RAM. The problem was solved using MATLAB with the publicly available SDPT3 solver through the CVX interface.

\subsubsection{Test Cases}
Four different settings for the objective function are considered when designing the droop coefficients:

\begin{itemize}
\item \textbf{Case I:} Coefficients for both active and reactive powers are computed (i.e., Volt/VAr/Watt control);
\item textbf{Case II:} Penalization of active power contributions ($\bM_p \gg 1$); 
\item \textbf{Case III:} Penalization reactive power contribution ($\bM_q \gg 1$);
\item \textbf{Case IV:} Number of controllers penalized ($\eta_p = \eta_q = 0.001$)
\end{itemize}

\subsubsection{Sensitivity Analysis}

Since the proposed methodology utilizes an approximation of \eqref{eqn:equal_constr} in order to obtain a convex optimization problem, numerical experiments are performed next to evaluate the approximation error. The four cases stated in the previous subsection were solved and an optimal $\bG$ was obtained for each case. In Figure \ref{Fig:reaL_err}, a parameter $\alpha$ was varied from 0 to 1 and the relative approximation error between $(\bI - \bH\bG')^{-1}$ and $(\bI + \bH\bG')$ was assessed, where $\bG' = \alpha \cdot \bG$. It can be seen from Figure \ref{Fig:reaL_err} that the approximation error does not exceed $10\%$.

%%%%%%%%%%%%%
\begin{figure}[t!]
\vspace{-3.5cm}
\begin{center}
\hspace{-4mm}\includegraphics[width=8.5cm ]{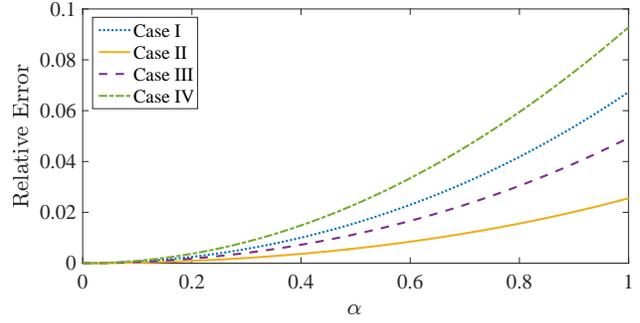}
\end{center}
\vspace{-3.5cm}
\caption{Sensitivity analysis for the truncated Neuman series approximation for the considered four cases.}
\label{Fig:reaL_err}
\vspace{-.2cm}
\end{figure}
%%%%%%%%%%%%%

\subsubsection{Implementation in multi-phase systems} 

We next consider the full three-phase version of the IEEE 37-node test system was implemented to further assesses how the computation time  scales with the problem size. It is assumed that every node in the system consisted of three phases, and that each phase may have inverters. The three-phase linearization of the power flow equations in \cite{multiphaseArxiv} is used. The average computational time is measured for each case over five runs. As seen in Table \ref{tab:comp_time}, despite the decision matrices dimensions increasing threefold, the computational time is within seconds.

%%%%%%%%%%%%%
\begin{table}[]
\centering
\caption{Computational time required to solve the optimization problem.}
\label{tab:comp_time}
\begin{tabular}{|l|ll}
\hline
Computational Time (s) & \multicolumn{1}{l|}{Single-Phase} & \multicolumn{1}{l|}{Three-Phase} \\ \hline
Case I                 & 1.05                              & 1.98                             \\ \cline{1-1}
Case II                & 1.15                              & 2.51                             \\ \cline{1-1}
Case III               & 0.92                              & 2.34                             \\ \cline{1-1}
Case IV                & 0.49                              & 1.51                             \\ \cline{1-1}
\end{tabular}
\end{table}
%%%%%%%%%%%%

%%%%%%%%%%%%%
\begin{figure}[t!]
\vspace{-3.5cm}
\begin{center}
\hspace{-4mm}\includegraphics[width=8.5cm ]{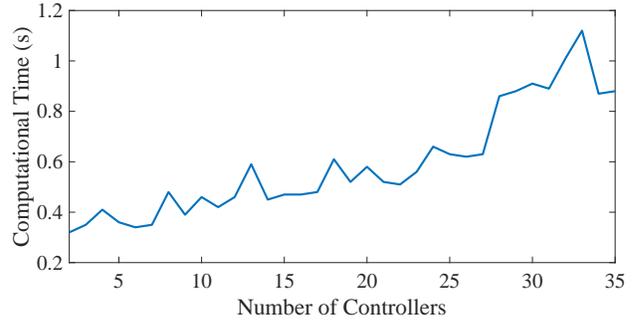}
\end{center}
\vspace{-3.5cm}
\caption{Sensitivity analysis demonstrating how the total simulation time increases as the number of controllers increases. The required amount of computational time as the number of controllers increases is well under the 5-15 minute time window available to solve the optimization problem.}
\label{Fig:comp_time}
\vspace{-.2cm}
\end{figure}
%%%%%%%%%%%%%

Results regarding the computational time for different number of inverters are provided next. Figure \ref{Fig:comp_time} demonstrates how the computational time increases as the number of controllers increases for Case I, measured using \texttt{cputime} in CVX. As expected, the computational burden increases with solution space size. 

%%%%%%%%%%%%%%%%%%%%%%%%%%%%%%%%%%%%%%%%%%%%%%%%%%
\section{Conclusion}
\label{sec:conclusion}
%%%%%%%%%%%%%%%%%%%%%%%%%%%%%%%%%%%%%%%%%%%%%%%%%%

The paper addressed the design of proportional control strategies for DERs for voltage regulation purposes.  The design of the coefficients of the controllers leveraged suitable linear approximations of the AC power-flow equations and is robust to uncertainty in the forecasted non-controllable loads/power injections. Stability of the proposed local controllers when deployed in the actual network (i.e., considering nonlinear AC power-flow equations in the analysis) was analytically established. 

The simulation results highlighted that the proposed controllers exhibit superior performance compared to the recommended IEEE 1547.8 Volt/VAR settings in terms of stability and voltage regulation capabilities, as well as compared to methods in the literature which use sensitivity matrices to design the droop coefficients. Particularly, if the droop coefficients are not tuned properly or set using rule-of-thumb guidelines, voltage oscillations can occur due to fast timescale fluctuations in load and solar irradiance, or under/over voltage conditions may be encountered. %In addition, by penalizing the density of the control matrices, the selection of impactful Volt/VAR/Watt controller locations can be found. 

A sensitivity analysis was performed regarding the approximation error induced by using the truncated Neuman series, and how the computational burden changes with respect to the number of controllers. The overall framework provides a light-weight, yet powerful, method of updating existing Volt/VAR droop coefficients in advanced inverters to achieve a variety of objectives while ensuring voltage stability under uncertainty.

\appendix[]

\begin{proof}[Proof of Theorem \ref{thm:stab}]
Let $\hat{\bz}(k) :=  \bz(k) + \proj_{\cD(k)} (\bG \bDelta \bv(k-1))$. We have that
\begin{align} \label{eqn:deltaV_ineq}
 &\|\bDelta \bv(k)\|_2 \leq \left \| F_L(\hat{\bz}(k))  - F_L(\bar{\bz}) \right \|_2 + \left \| F_L(\hat{\bz}(k))  - F(\hat{\bz}(k)) \right \|_2 \nonumber \\
&\quad \leq \left \| \bH \bDelta \bz_{nc}(k) + \bH\proj_{\cD(k)} (\bG \bDelta \bv(k-1)) \right \|_2 + \delta \nonumber\\
&\quad \leq \|\bH\|_2 C + \|\bH\|_2 \| \proj_{\cD(k)} (\bG \bDelta \bv(k-1)) \|_2 + \delta \nonumber\\
&\quad \leq \|\bH\|_2 C + \|\bH\|_2 \|\bG \bDelta \bv(k-1) \|_2 + \delta 
\end{align}
where the second inequality follows by Assumption \ref{asm:lin_mod} and the definition of the linear model \eqref{eq:approximate}, the third inequality holds by the hypothesis that $\|\bDelta \bz_{nc}(k)\|_2 \leq C$, and in the last inequality the non-expansive property of the projection operator was used; in particular, as ${\bf 0} \in \cD(k)$ for all $k$, we have that $\| \proj_{\cD(k)} (\bx) \|_2 \leq \| \bx \|_2$ for all $k$ and any $\bx$.

We next proceed to obtain a bound on $\|\bG \bDelta \bv(k-1) \|_2$. Similarly to the derivation in \eqref{eqn:deltaV_ineq}, it holds that
\begin{align} \label{eqn:deltaGV_ineq_rec}
&\|\bG \bDelta \bv(k) \|_2 \nonumber \\
&\, \leq \left \| \bG \bH \bDelta \bz_{nc}(k) + \bG \bH\proj_{\cD(k)} (\bG \bDelta \bv(k-1)) \right \|_2 + \| \bG \|_2 \delta \nonumber \\
&\, \leq \|\bG \bH\|_2 C + \|\bG \bH\|_2 \| \proj_{\cD(k)} (\bG \bDelta \bv(k-1)) \|_2 + \|\bG \|_2\delta \nonumber\\
&\, \leq rC + r\|\bG \bDelta \bv(k-1) \|_2 + \|\bG \|_2\delta.
\end{align}
By applying \eqref{eqn:deltaGV_ineq_rec} recursively, we obtain 
\begin{align}\label{eqn:deltaGV_ineq}
\|\bG \bDelta \bv(k) \|_2 &\leq (rC + \|\bG \|_2\delta) \sum_{i = 0}^{k-1}r^i  + r^k \|\bG \bDelta \bv(0) \|_2 \nonumber \\
& = (rC + \|\bG \|_2\delta) \frac{1 - r^k}{1 - r} + r^k \|\bG \bDelta \bv(0) \|_2.
\end{align}
Now, plugging \eqref{eqn:deltaGV_ineq} in \eqref{eqn:deltaV_ineq} yields
\begin{align*}
&\|\bDelta \bv(k)\|_2 \leq \|\bH\|_2 C \\
&\, + \|\bH\|_2 \left( (rC + \|\bG \|_2\delta) \frac{1 - r^{k-1}}{1 - r} + r^{k-1} \|\bG \bDelta \bv(0) \|_2 \right) + \delta.
\end{align*}
The proof is then completed by taking $\limsup$ and rearranging.
\end{proof}

%%%%%%%%%%%%%%%%%%%%%%%%%%%%%%%%%%%%%%%%%%%%%
\bibliographystyle{IEEEtran}
\bibliography{biblio}
%%%%%%%%%%%%%%%%%%%%%%%%%%%%%%%%%%%%%%%%%%%%%

\newpage
\begin{IEEEbiography}[{\includegraphics[width=1in]{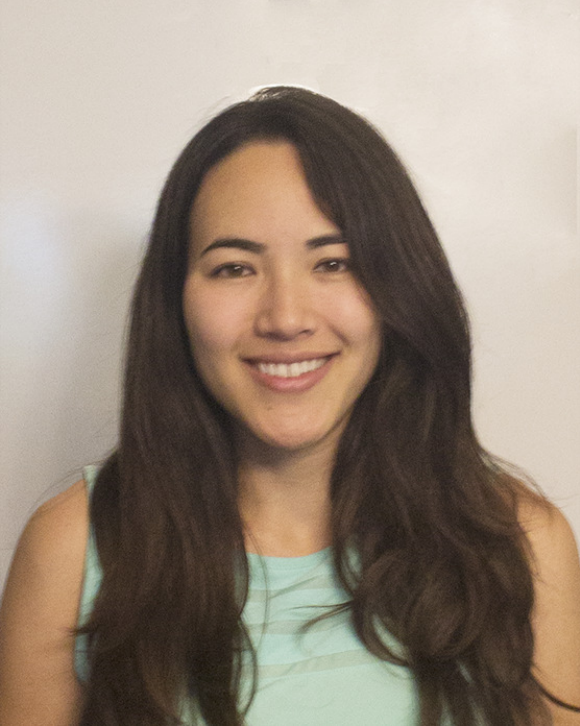}}]{Kyri Baker} (S'08, M'15)
received her B.S., M.S, and Ph.D. in Electrical and Computer Engineering at Carnegie Mellon University in
2009, 2010, and 2014, respectively. Since Fall 2017, she has been an Assistant Professor at the University of Colorado, Boulder, in the Department of Civil, Environmental, and Architectural Engineering, with a courtesy appointment in the Department of Electrical, Computer, and Energy Engineering. Previously, she was a Research Engineer at the National Renewable Energy Laboratory in Golden, CO. Her research interests include power system optimization and planning, building-to-grid integration, smart grid technologies, and renewable energy.
\end{IEEEbiography}

\begin{IEEEbiography}[{\includegraphics[width=1in]{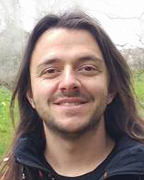}}]{Andrey Bernstein} (M'15)
received the B.Sc. and M.Sc. degrees in Electrical Engineering from the Technion - Israel Institute of Technology in 2002 and 2007 respectively, both summa cum laude. He received the Ph.D. degree in Electrical Engineering from the Technion in 2013. Between 2010 and 2011, he was a visiting researcher at Columbia University. During 2011-2012, he was a visiting Assistant Professor at the Stony Brook University. From 2013 to 2016, he was a postdoctoral researcher at the Laboratory for Communications and Applications of Ecole Polytechnique Federale de Lausanne (EPFL), Switzerland. Since October 2016 he has been a Senior Scientist at the National Renewable Energy Laboratory, Golden, CO, USA.
\end{IEEEbiography}

\begin{IEEEbiography}[{\includegraphics[width=1in]{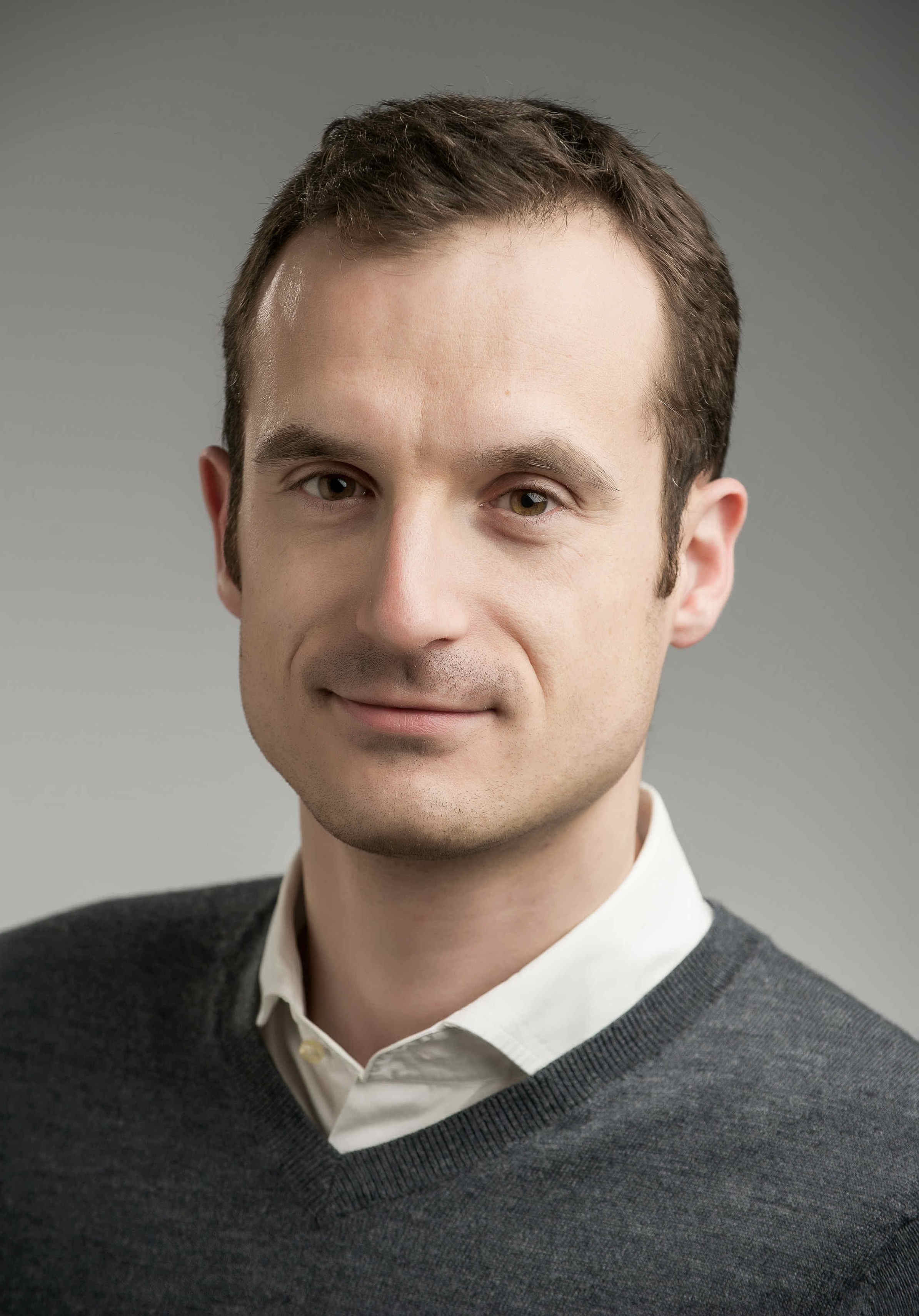}}]{Emiliano Dall'Anese } (S'08, M'11)
received the Laurea Triennale (B.Sc Degree) and the Laurea Specialistica (M.Sc Degree) in Telecommunications Engineering from the University of Padova, Italy, in 2005 and 2007, respectively, and the Ph.D. In Information Engineering from the Department of Information Engineering, University of Padova, Italy, in 2011. From January 2009 to September 2010, he was a visiting scholar at the Department of Electrical and Computer Engineering, University of Minnesota, USA. From January 2011 to November 2014 he was a Postdoctoral Associate at the Department of Electrical and Computer Engineering and Digital Technology Center of the University of Minnesota, USA. Since December 2014 he has been a Senior Engineer at the National Renewable Energy Laboratory, Golden, CO, USA.

His research interests lie in the areas of optimization and signal processing, with application to power systems and communication networks. Current efforts focus on distributed and online optimization of power distribution systems with distributed (renewable) energy resources, and statistical inference for grid data analytics.
\end{IEEEbiography}

\begin{IEEEbiography}[{\includegraphics[width=1in]{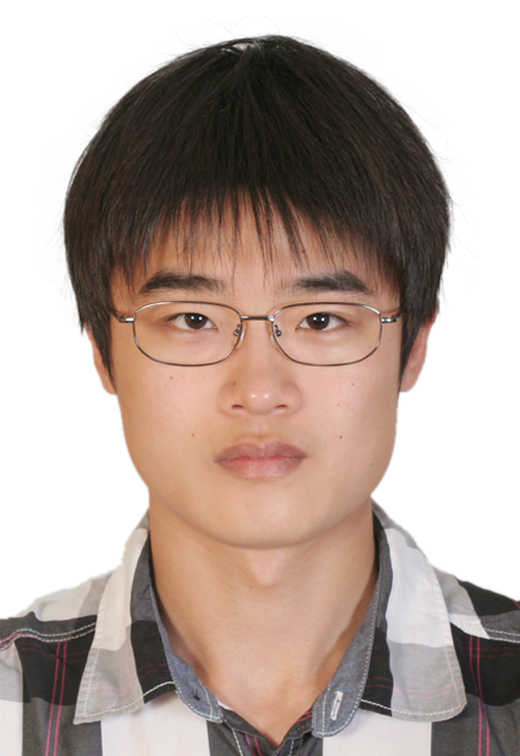}}]{Changhong Zhao} (S'12, M'15)
received the B. Eng. degree in Automation from Tsinghua University in 2010, and the PhD degree in Electrical Engineering from California Institute of Technology in 2016. He is currently a Research Engineer with the Power Systems Engineering Center at National Renewable Energy Laboratory, Golden, CO, USA. His research interests  include power system dynamics and stability, distributed control, and optimal power flow. He was a recipient of the Caltech Demetriades-Tsafka-Kokkalis PhD Thesis Prize and the Caltech Charles Wilts Prize for Doctoral Research.
\end{IEEEbiography}

\end{document}